\documentclass[11pt,a4paper]{article}

\usepackage[T1]{fontenc}
\usepackage[utf8]{inputenc}
\usepackage{lmodern, comment}
\usepackage[margin=1in]{geometry}
\usepackage{amsmath,amssymb,amsthm,mathtools}
\usepackage{enumitem}
\usepackage{microtype}
\usepackage[colorlinks=true,linkcolor=blue,citecolor=blue,urlcolor=blue]{hyperref}

\newtheorem{theorem}{Theorem}
\newtheorem{lemma}[theorem]{Lemma}
\newtheorem{corollary}[theorem]{Corollary}
\newtheorem{prop}[theorem]{Proposition}

\newtheorem{remark}[theorem]{Remark}

\newcommand{\dd}{\mathrm{d}}
\newcommand{\dc}{\mathrm{d}^{c}}
\newcommand{\Vol}{\operatorname{Vol}}
\newcommand{\Span}{\operatorname{span}}
\newcommand{\R}{\mathbb{R}}
\newcommand{\C}{\mathbb{C}}
\newcommand{\CP}{\mathbb{CP}}
\newcommand{\K}{\mathcal K}

\title{Upper bounds of the Laplace spectrum on compact Kähler manifolds}
\author{Yuan Yuan}
\date{}

\begin{document}
\maketitle
\vspace{-2em}

\begin{abstract}
On a connected compact K\"ahler manifold without boundary, 
we derive a uniform upper bound for each Laplace eigenvalue over all
K\"ahler metrics in a given K\"ahler class. This provides an affirmative
answer to a question of Yau. 
\end{abstract}

\section{Introduction}\label{sec:introduction}

The study of upper bounds for Laplace eigenvalues seeks geometric
conditions that control the spectrum as the metric varies. On a closed manifold,
rescaling the metric changes every positive eigenvalue, so a
normalization is needed before such a question can be meaningful. For
surfaces, the natural normalization is the area. Hersch's theorem
\cite{Hersch1970} identifies the round sphere as a maximizer of the first
positive eigenvalue among metrics on the two sphere with prescribed area, 
with the estimate $ \lambda_1(S^2,g)\operatorname{Area}_g(S^2)\leq 8\pi.$ 
It is a basic example of spectral control by topology under an area
constraint.

Yang and Yau \cite{YangYau1980} established such a connection by using
meromorphic functions on compact Riemann surfaces. If a surface
$\Sigma$ admits a holomorphic map of degree $\delta$ onto
$\mathbb{CP}^1$, their argument yields
$\lambda_1(\Sigma,g)\operatorname{Area}_g(\Sigma)\leq 8\pi\delta$
for every metric compatible with its complex structure. Applying
Riemann--Roch, they obtained the genus bound
$\lambda_1(\Sigma,g)\operatorname{Area}_g(\Sigma) \leq 8\pi(\gamma+1),$
where $\gamma$ is the genus. Li and Yau \cite{LY82} subsequently
introduced conformal volume and its two-dimensional counterpart,
conformal area. Their work placed these eigenvalue estimates in a
framework that also applies to non-orientable surfaces and relates
spectral bounds to conformal immersions and the Willmore functional.
It also included estimates for K\"ahler manifolds admitting meromorphic
maps onto the projective line.

The corresponding problem for higher eigenvalues was addressed by
Korevaar \cite{K93}. He proved that there exists a universal constant $C>0$ such that
$ \lambda_k(\Sigma,g)\operatorname{Area}_g(\Sigma) \leq C(\gamma+1)k $ holds 
for every closed orientable surface of genus $\gamma$, every smooth
metric $g$ and every $ k\geq1$. Thus the entire sequence of eigenvalues admits a common
upper bound with linear dependence on the index $k$. More generally,
Korevaar obtained bounds of order $k^{\frac 2m}$ for the normalized
eigenvalues $\lambda_k(M,g)\operatorname{Vol}_g(M)^{\frac 2m}$ within a fixed
conformal class on a compact Riemannian manifold of real dimension $m$. These
results concern uniform estimates across families of metrics, which
require more than the asymptotic information provided by Weyl's law
for an individual metric.

In complex dimension greater than one, a different family of metrics
arises by fixing a K\"ahler class. Bourguignon, Li, and Yau
\cite{BLY1994} proved that a full holomorphic immersion into complex
projective space supplies an upper bound for the first positive
eigenvalue of every K\"ahler metric, 
with the bound expressed
through the pairing of the pulled-back Fubini--Study class with a power
of the K\"ahler class. Since this pairing and the volume are unchanged
within a fixed class, their theorem bounds $\lambda_1$ uniformly on
every K\"ahler class of a projective manifold.

Motivated by Korevaar’s estimates for higher eigenvalues on surfaces, Yau \cite{Y} asked about analogous bounds for higher-dimensional Kähler manifolds. 
Yau's question is the central motivation for our work. Kokarev made a major advance in this direction \cite{K20b}, who established uniform upper bounds for all Laplace eigenvalues in a fixed Kähler class whenever the manifold admits a nonconstant holomorphic map to complex projective space. His theorem extends the first-eigenvalue estimates of Bourguignon, Li, and Yau to the entire spectrum and provides a higher-dimensional Kähler analogue of Korevaar’s inequalities. The bound is linear in the eigenvalue index $k$, with its coefficient controlled by the holomorphic degree of the map and the dimensions. Importantly, the map need not be an immersion or have full-dimensional image, so the theorem also applies to certain nonprojective Kähler manifolds.

Based on Yau’s original question, Kokarev formulated  two  further 
questions \cite{K20b}. The first is whether Laplace eigenvalues are uniformly bounded in every Kähler class on an arbitrary compact Kähler manifold, without assuming the existence of a holomorphic map to projective space. The second is whether one can obtain bounds with growth \(k^{\frac 1n}\), where \(n\) is the complex dimension, as suggested by Weyl’s law. These questions distinguish the existence of uniform spectral bounds from the optimal growth of such bounds in the eigenvalue index $k$.

The present paper addresses both questions. 
We provide an affirmative answer to Yau’s question for fixed Kähler classes, without additional geometric assumptions. More precisely, we establish a uniform upper bound of order \(k\) in every Kähler class on an arbitrary closed Kähler manifold, thereby resolving Kokarev’s first question in full generality.
Under the holomorphic map hypothesis of Kokarev’s theorem, we obtain a bound of order \(k^{\frac 1d}\), where \(d\) is the complex dimension of the image. In particular, on a projective manifold we obtain a bound of order \(k^{\frac 1n}\), uniformly over all Kähler metrics in any fixed real Kähler class. This answers the second question for complex projective algebraic manifolds.

\medskip

We first set up the notation before stating our results.
For a K\"ahler class $\alpha$ on a complex manifold $(M,J)$ of complex
dimension $n$, write
\[
\K_\alpha=\{\omega:\omega\text{ is a smooth K\"ahler form on }(M,J),
[\omega]=\alpha\}.
\]
For $\omega\in\K_\alpha$ and a holomorphic line bundle $L\to M$, set
\[
\deg_{\omega}(L)
:=\int_M c_1(L)\wedge\frac{\omega^{n-1}}{(n-1)!},
\quad
\Vol_{\omega}(M):=\int_M\frac{\omega^n}{n!}.
\]
Both quantities only depend on $[\alpha]$.
Fixing a nonconstant holomorphic map
$\Phi:M\to\CP^N$, we let 
$d=\max_{p\in M}\operatorname{rank}_{\mathbb C}(d\Phi_p),
 L=\Phi^*\mathcal O_{\CP^N}(1).$
Since \(M\) is compact, \(\Phi\) is proper. Remmert’s proper mapping theorem implies that \(\Phi(M)\) is a closed complex analytic variety of \(\mathbb{CP}^N\). It follows from Chow’s theorem  that \(\Phi(M)\) is a complex projective algebraic variety of complex dimension $d$.
Moreover $1\leq d\leq n$, since $\Phi$ is a nonconstant map. Define
\begin{equation*}\label{eq:c}
c_\Phi
=\frac{2\pi\deg_{\omega}(L)}{\Vol_{\omega}(M)}
=2\pi n\,
\frac{\displaystyle\int_M c_1(L)\wedge\omega^{n-1}}
     {\displaystyle\int_M\omega^n}.
\end{equation*}
Then $c_\Phi>0$ and $c_\Phi$ is independent of the choice of
$\omega\in\K_\alpha$. 
The eigenvalues of the nonnegative Laplace--Beltrami operator
are counted with multiplicities and indexed by
$0=\lambda_0(M,\omega)<\lambda_1(M,\omega)
\leq\lambda_2(M,\omega)\leq\cdots.$
With these notations, we can state our main theorems. 
\begin{theorem}\label{thm:main}
Let $(M,J)$ be a connected compact complex manifold of complex dimension
$n\geq 1$, without boundary, and let $\alpha$ be a K\"ahler class.
There is a constant $C=C(M,J,\alpha) >0$ such that 
\begin{equation*}\label{eq:main}
\sup_{\omega\in\K_\alpha}\lambda_k(M,\omega) \leq C(k+1)
\end{equation*}
holds  for all $k\geq 0.$
\end{theorem}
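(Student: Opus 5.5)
We argue by the min--max principle: it suffices to construct, for each $\omega\in\K_\alpha$ and each $k$, a $(k+1)$-dimensional space of Lipschitz test functions (or $k+1$ functions with disjoint supports) whose Rayleigh quotients
\[
R_\omega(u)=\frac{\int_M|\nabla u|^2_\omega\,\omega^n}{\int_M u^2\,\omega^n}
\]
are at most $C(k+1)$. The central observation is that the Dirichlet energy of a function becomes \emph{linear} in $\omega$ in the Kähler setting:
\[
|\nabla u|^2_\omega\,\frac{\omega^n}{n!}=2n\,\dd u\wedge \dc u\wedge\frac{\omega^{n-1}}{n!}
\]
(up to a normalizing constant depending on conventions). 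Hence, if $u$ is built from a fixed smooth function $f$ on $M$, the numerator becomes $\int_M \dd f\wedge\dc f\wedge\omega^{n-1}$. This is a pairing of a fixed form with $\omega^{n-1}$, and we would bound it cohomologically. Concretely, fix a reference Kähler form $\omega_0\in\alpha$. Since $\dd f\wedge \dc f\le C_f\,\omega_0$ as real $(1,1)$-forms, and $\omega^{n-1}$ is a strongly positive closed form, we obtain
\[
\int_M \dd f\wedge\dc f\wedge\omega^{n-1}\le C_f\int_M\omega_0\wedge\omega^{n-1}=C_f\int_M\omega_0^n,
\]
where the last equality holds because the class is fixed. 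Thus the energy of every fixed smooth function is uniformly bounded over $\K_\alpha$, while the total volume $V=\int_M\omega^n$ is also fixed.

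The difficulty is the denominator: the measure $\omega^n$ can concentrate anywhere, so the $L^2$ mass of $f$ is not controlled. We therefore adopt the Korevaar / Grigor'yan--Netrusov--Yau strategy. Fix a finite atlas and a background metric $g_0$ on $M$; the metric space $(M,g_0)$ is doubling with controlled covering constants. For a given $\omega$, consider the measure $\mu=\omega^n/n!$ of total mass $V$. By the Grigor'yan--Netrusov--Yau decomposition (or Korevaar's annuli lemma), for each $k$ there are $2(k+1)$ ``annuli'' $A_i\subset 2A_i$ (with respect to $g_0$) such that the enlarged sets $2A_i$ are pairwise disjoint and each satisfies $\mu(A_i)\ge c\,V/(k+1)$, with $c$ depending only on $(M,g_0)$. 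For each annulus, we take the standard cutoff $u_i$ that equals $1$ on $A_i$, is supported in $2A_i$, and is $g_0$-Lipschitz with slope at most $1/r_i$. The denominator then satisfies $\int u_i^2\,\dd\mu\ge cV/(k+1)$.

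The main obstacle is bounding the numerator $\int \dd u_i\wedge\dc u_i\wedge\omega^{n-1}$ uniformly. For small radii $r_i$, the gradient bound $1/r_i$ blows up, and the pointwise domination above gives only $r_i^{-2}\int_{2A_i}\omega_0\wedge\omega^{n-1}$. This quantity is not obviously small, because the mass of the trace-type measure $\omega_0\wedge\omega^{n-1}$ on a small ball need not scale like $r^2$. I would first handle this with a capacity argument. Replace the linear cutoffs by logarithmic ones, or by functions of the form $\chi(\log|z-p|)$, and use the Chern--Levine--Nirenberg inequalities: for closed positive $T=\omega^{n-1}$ with $\int T\wedge\omega_0$ fixed, and for $u=\chi(\rho)$ with $\rho$ a local psh exhaustion, $\int \dd u\wedge\dc u\wedge T$ is bounded by a constant times the total mass, independently of the radius. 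Here the $\dd\dc$-structure of $\log|z|$ is exactly what makes the energy scale invariant. This turns the cover-type argument into a bound on the numerator by $C\int\omega_0\wedge\omega^{n-1}=C'$, uniformly over the scale. Disjointness of supports then lets us sum at most a bounded overlap. Combining the two estimates gives $R_\omega(u_i)\le C'(k+1)/(cV)$ for $k+1$ of the disjoint functions. Min--max then yields $\lambda_k\le C(k+1)$, with $C$ depending only on $(M,J,\alpha)$ through $g_0$, $\omega_0$ and $V$.
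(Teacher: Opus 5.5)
Your skeleton matches the paper's. The energy is the pairing $\int_M\dd u\wedge\dc u\wedge\frac{\omega^{n-1}}{(n-1)!}$, dominated by $\sup|\dd u|^2_{g_0}$ times a cohomological quantity. The Grigor'yan--Netrusov--Yau annuli are taken in $(M,d_0)$ with the measure $\mu_\omega$, and min--max concludes.

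The gap is in the one step that carries the proof: a bound on the cutoff energies that is uniform in the radius. Your diagnosis there is mistaken. You assert that the $\omega_0\wedge\omega^{n-1}$-mass of a small $g_0$-ball need not scale like $r^2$. It does, uniformly in the centre, the radius and the form. For a smooth positive closed $(n-1,n-1)$-form $T$ near $\overline{B_{\mathrm E}(a,R)}$, set $\nu(s)=s^{-2}\int_{B_{\mathrm E}(a,s)}T\wedge\dd\dc|z-a|^2$. By Stokes, $\nu(s)=\int_{\{|z-a|=s\}}T\wedge\dc\log|z-a|^2$, so $\nu(R)-\nu(r)=\int_{\{r<|z-a|<R\}}T\wedge\dd\dc\log|z-a|^2\geq0$; this is Lelong monotonicity. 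Transplanting it through finitely many charts in which $\omega_0$ is comparable to $\dd\dc|z|^2$ gives $\int_{B_0(p,r)}T\wedge\omega_0\le C_0r^2\int_MT\wedge\omega_0$ for all $p$ and $r$. With this, the plain linear cutoffs $h(d_0(p,\cdot))$ already have energy at most $8C_0\int_M\omega_0\wedge\frac{\omega^{n-1}}{(n-1)!}=8nC_0V$. This $r^2$-bound is the missing idea.

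Your substitute does not avoid it.
\begin{itemize}
\item \emph{Logarithmic cutoffs.} For $\rho=\log|z-p|$ one has $\dd\rho\wedge\dc\rho\le\frac{1}{4|z-p|^2}\dd\dc|z-p|^2$. The same Stokes computation gives $\int_{\{r/2<|z-p|<r\}}\dd\rho\wedge\dc\rho\wedge T=\frac12\int_{r/2}^{r}\nu(s)\,\frac{ds}{s}$, with $\nu$ centred at $p$. So the energy of a logarithmic cutoff on a dyadic annulus is a logarithmic average of exactly the ratio you said is uncontrolled.
\item \emph{Chern--Levine--Nirenberg.} This inequality does not give radius independence either. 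It is stated for bounded psh functions, and your two-sided cutoffs are not psh. More importantly, for $K=\overline{B_{\mathrm E}(p,r)}\Subset L=\overline{B_{\mathrm E}(p,2r)}$ its constant grows like $r^{-2}$ when mass is measured against the fixed form $\dd\dc|z|^2$. Rescaling makes the energy dilation invariant, but it turns the relevant mass into a multiple of $\nu(2r)$.
\end{itemize}
In both cases the uniform bound holds only after invoking monotonicity. Once you prove that, the logarithmic cutoffs are unnecessary and your argument closes exactly as in the paper.
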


\begin{theorem}\label{thm:holomorphic-map}
Let $(M,J)$ be a connected compact K\"ahler manifold of complex
dimension $n\geq1$, and let $\alpha$ be a
K\"ahler class. Assume that $M$ admits a nonconstant holomorphic map
$\Phi:M\to\CP^N$. Then
\begin{align*}
\sup_{\omega\in\K_\alpha} \lambda_k(M,\omega)
&\leq72c_\Phi\left[\left(\frac{3d!}{2}\right)^{\frac 1d}+1\right]
       k^{\frac 1d},
\end{align*}
for all $k\geq1$.
\end{theorem}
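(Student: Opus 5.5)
The proof will follow the Korevaar / Grigor'yan--Netrusov--Yau scheme: construct many disjointly supported test functions, each with controlled Dirichlet energy relative to its $L^2$ mass.

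\emph{Setting up the metric space.} Write $\mu_\omega=\omega^n/n!$ and let $\omega_{FS}$ be the Fubini--Study form, so that $\Phi^*\omega_{FS}$ represents $2\pi c_1(L)$. On $M$ we use the pseudo-distance $\rho(x,y)=d_{FS}(\Phi(x),\Phi(y))$. Equivalently, we push $\mu_\omega$ forward to the $d$-dimensional projective variety $X=\Phi(M)$ with the Fubini--Study metric. Since $X$ has dimension $d$ and degree $D$, a ball of radius $r$ in $X$ has volume of order $r^{2d}$. Hence $(X,d_{FS})$ satisfies a covering property: each ball of radius $r$ is covered by a bounded number of balls of radius $r/2$, or at least one gets an annulus-decomposition lemma of the right doubling type.

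\emph{Step 1: disjoint annuli.} Apply the Grigor'yan--Netrusov--Yau decomposition to the nonatomic measure $\nu=\Phi_*\mu_\omega$ on $X$. This produces $2k$ annuli $A_i=\{r_i<\rho(\cdot,a_i)<R_i\}$ with the following properties: the doubled annuli $2A_i$ are pairwise disjoint, and $\nu(A_i)\ge c\,\nu(X)/k$. The constant $c$ depends only on the covering constant, and this is where the factors $(3d!/2)^{1/d}$ enter. One then splits into two cases. In the first case the annuli are "large", with $R_i$ bounded below. In the second case one uses genuinely small balls, with radius $r\sim k^{-1/(2d)}$ measured in the $d_{FS}$ scale.

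\emph{Step 2: test functions and the energy estimate.} Take $u_i=f_i\circ\Phi$, where $f_i$ is a Lipschitz cutoff equal to $1$ on $A_i$, supported in $2A_i$, with Lipschitz constant $1/r$ relative to $d_{FS}$. The key Kähler identity is
\[
|\nabla u|_\omega^2\,\omega^n = 2n\,i\,\partial u\wedge\bar\partial u\wedge\omega^{n-1}\le n\,|\nabla f|^2_{FS}\circ\Phi\;\Phi^*\omega_{FS}\wedge\omega^{n-1}.
\]
Here we use that $i\partial u\wedge\bar\partial u$ is dominated by $|df|^2\Phi^*\omega_{FS}$ as positive $(1,1)$-forms. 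Summing over the disjoint supports gives
\[
\sum_i\int|\nabla u_i|^2\mu_\omega\le \frac{C}{r^2}\int_M\Phi^*\omega_{FS}\wedge\frac{\omega^{n-1}}{(n-1)!}=\frac{C}{r^2}\,2\pi\deg_\omega(L).
\]
This quantity is cohomological. Consequently at least $k+1$ of the $u_i$ have Rayleigh quotient at most
\[
\frac{C\,2\pi\deg_\omega L}{r^2\,\Vol_\omega(M)/k}\cdot\frac1k=C\,c_\Phi\, r^{-2}.
\]

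\emph{Step 3: closing the argument and the expected obstacle.} The $u_i$ are disjointly supported, so the min--max principle gives $\lambda_k\le C c_\Phi r^{-2}$. With $r\sim k^{-1/(2d)}$ this yields $k^{1/d}$, since $r^{-2}=k^{1/d}$. The delicate step is making the scale bookkeeping consistent. The pushforward $\nu$ may concentrate near singular points of $X$, or near images of the locus where $\operatorname{rank} d\Phi<d$. The decomposition lemma therefore needs a volume-growth covering statement on $X$ with constants depending only on $d$. I would first try to cover $X$ by the $d_{FS}$-balls of $\CP^N$ itself. Alternatively, I would project generically to $\CP^d$, a finite map, and work with the standard metric on $\CP^d$, whose ball covering numbers at scale $r$ are about $d!\,r^{-2d}$. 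That projection choice is exactly what produces the constant $(3d!/2)^{1/d}$.
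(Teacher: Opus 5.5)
\textbf{There is a genuine gap, in Step~3, and it is a missing idea rather than bookkeeping.} Your pointwise bound $E_\omega(f\circ\Phi)\le C\,\mathrm{Lip}(f)^2\int_{\Phi^{-1}(\operatorname{supp}\dd f)}\Phi^*\omega_{FS}\wedge\frac{\omega^{n-1}}{(n-1)!}$ is correct. Averaging over disjoint supports then gives quotients $\lesssim c_\Phi r^{-2}$, \emph{provided} every cut-off has $d_{FS}$-Lipschitz constant $\lesssim 1/r$ with $r\gtrsim k^{-1/(2d)}$. Nothing supplies such a scale.

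By Yau's solution of the Calabi conjecture, $\mu_\omega$ can be any smooth positive volume form of mass $V$. So $\nu=\Phi_*\mu_\omega$ can concentrate at arbitrarily small $d_{FS}$-scales anywhere on $X$; singularities of $X$ and the degeneracy locus of $\dd\Phi$ are not the issue. The Grigor'yan--Netrusov--Yau (or Colbois--Maerten) decomposition then produces annuli far below $k^{-1/(2d)}$. For those, your only control is Lelong-type monotonicity for $\Phi_*T_\omega$, as in Lemma~\ref{lem:global}. That gives energy $O(\deg_\omega(L))$ per annulus, hence quotients $O(c_\Phi k)$, which is just the linear bound of Theorem~\ref{thm:main}.

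This loss is real, not an artifact of a crude estimate. Take $M=\CP^1\times\CP^1$ with the Segre map ($d=2$), and $\omega=\omega_1+\omega_2'$, where $\omega_2'$ has the same area as $\omega_2$ but is concentrated in a disc of radius $\epsilon$.
\begin{itemize}
\item A pulled-back $d_{FS}$-cut-off at scale $\rho\gg\epsilon$ centred on the slab has energy $\approx 1$, by scale invariance of the two-dimensional Dirichlet energy in the first factor, and mass $\approx\rho^2$.
\item Having $k$ disjoint supports forces $\rho\lesssim k^{-1/2}$, so these quotients are $\gtrsim k$; scales $\rho\lesssim\epsilon$ are worse.
\end{itemize}
So your test family yields only about $k$ as $\epsilon\to 0$, although $\lambda_k\lesssim k^{1/2}$ for these product metrics. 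Test functions whose energy is controlled through the fixed geometry of $(X,d_{FS})$ cannot see how $\omega$ distributes its volume.

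\textbf{The explicit constant is also out of reach.} The constant $c(N)$ in the decomposition is not explicit and depends on a doubling constant. The factor $(3d!/2)^{1/d}$ has nothing to do with covering numbers of $\CP^d$. In the paper it comes from choosing $q=\lceil(3d!k/2)^{1/d}\rceil$ so that $2\dim_{\C}H^0(M,L^q)\ge 2q^d/d!\ge 3k$. The factor $72$ is $8(r+1)^2$ for real rank $r=2$.

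\textbf{What you are missing is a global supply of test objects whose Rayleigh quotients do not depend on $\omega$.} The paper proceeds as follows.
\begin{itemize}
\item Normalize $h$ on $L$ so that $\operatorname{tr}_\omega$ of its curvature is the constant $c_\Phi$ (Lemma~\ref{lem:trace}).
\item By Bochner--Kodaira, every $s\in H^0(M,L^q)$ then satisfies $\nabla_q^*\nabla_q s=qc_\Phi s$. Hence $\mu^{\mathbb R}_{3k}(\nabla_q^*\nabla_q)\le qc_\Phi$ for every $\omega\in\K_\alpha$.
\item Bordoni's comparison, which rests on Kato's inequality $|\dd|s|_h|\le|\nabla s|$, converts this into $\lambda_k(M,\omega)\le 72\mu^{\mathbb R}_{3k}\le 72c_\Phi q$.
\end{itemize}
Since nothing is localized, concentration of $\omega$ plays no role there.
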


The following corollary is a direct consequence. 

\begin{corollary}
Let $(M,J)$ be a connected complex projective algebraic manifold of complex
dimension $n\geq1$, and let $\alpha $ be a
K\"ahler class. Then 
there is a constant $C=C(M,J,\alpha)>0$ such that
\begin{equation*}
\sup_{\omega\in\K_\alpha}\lambda_k(M,\omega)<Ck^{\frac 1n}
\end{equation*}
for all $k\geq1$.
\end{corollary}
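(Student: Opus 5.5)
The corollary should follow directly from Theorem~\ref{thm:holomorphic-map} once we produce a holomorphic map $\Phi:M\to\CP^N$ with $d=n$. The plan is to take $\Phi$ to be a projective embedding, note that its rank is maximal, and read off the constant.

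First, since $M$ is a connected complex projective algebraic manifold, it comes with a holomorphic embedding $\Phi:M\hookrightarrow\CP^N$ for some $N$. Being an embedding, $\Phi$ is an immersion, so $\operatorname{rank}_{\C}(d\Phi_p)=n$ at every point $p$. Hence
\[
d=\max_{p\in M}\operatorname{rank}_{\C}(d\Phi_p)=n .
\]
When $n\geq1$, $\Phi$ is nonconstant. Thus all hypotheses of Theorem~\ref{thm:holomorphic-map} hold for the Kähler manifold $(M,J)$, the class $\alpha$, and this $\Phi$.

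Next, Theorem~\ref{thm:holomorphic-map} gives, for all $k\geq1$,
\[
\sup_{\omega\in\K_\alpha}\lambda_k(M,\omega)\leq 72c_\Phi\left[\left(\frac{3n!}{2}\right)^{\frac1n}+1\right]k^{\frac1n}.
\]
As noted before the theorems, $c_\Phi$ depends only on $\alpha$ and $L=\Phi^*\mathcal O_{\CP^N}(1)$, and not on the choice of $\omega\in\K_\alpha$. So once the embedding is fixed, the prefactor is a constant depending on $(M,J,\alpha)$. Since the corollary asks for a strict inequality, take $C$ to be twice this prefactor. This $C$ is strictly larger than the prefactor because $c_\Phi>0$, and $k^{1/n}>0$, so the strict bound follows.

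No step here is a genuine obstacle; the only point to check is that a projective embedding realizes the full rank $d=n$.
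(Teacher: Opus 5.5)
Your proposal is correct and matches the paper, which presents the corollary as a direct consequence of Theorem~\ref{thm:holomorphic-map} applied to a projective embedding, so that $d=n$. Doubling the positive prefactor to obtain the strict inequality correctly fills in the one detail the paper leaves implicit.
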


\section{Preparations in K\"ahler geometry}
Let $(M,J)$ be a connected compact K\"ahler manifold of complex dimension
$n\geq1$, without boundary, and let $\alpha$ be a K\"ahler class on $M$.
For $\omega\in\K_\alpha$, the associated Riemannian metric is
$g_\omega(X,Y)=\omega(X,JY)$, and its volume form is
$\dd\mu_\omega=\frac{\omega^n}{n!}$. Fix a K\"ahler form $\omega_0\in\K_\alpha$ and 
write $g_0=g_{\omega_0}$. 
It follows by Stokes theorem that the volume $\int_M \dd\mu_\omega = \int_M \frac{\alpha^n}{n!}$ is independent of the choice of $\omega$, denoted by $V$.  
Let $d_0$ be the corresponding Riemannian distance, and let
$B_0(p,r)$ be the open balls centered at $p \in M$ with radius $r$ with respect to $g_0$. 
Write $\dc=\sqrt{-1}(\bar\partial-\partial)$. Then  $\dd\dc=2\sqrt{-1}\partial\bar\partial.$

\begin{lemma}\label{lem:euclidean}
Let $T$ be a smooth positive closed $(n-1,n-1)$-form on a neighborhood
of the closure of the Euclidean ball $\overline{B_{\mathrm E}(a,R)}$ in $\C^n$.
Then
\begin{equation}\label{eq:euclidean}
\int_{B_{\mathrm E}(a,r)}T\wedge \dd\dc \left( |z-a|^2\right)
\leq\frac{r^2}{R^2}\int_{B_{\mathrm E}(a,R)}T\wedge \dd\dc \left( |z-a|^2\right).
\end{equation}
holds for $0<r<R$. 
\end{lemma}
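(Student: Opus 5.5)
This is the classical Lelong monotonicity argument, specialized to a closed positive current of bidimension $(1,1)$. Translate so that $a=0$ and write $\rho=|z|^2$. Set
\[
\nu(r):=\frac{1}{r^2}\int_{B_{\mathrm E}(0,r)}T\wedge\dd\dc\rho .
\]
The inequality \eqref{eq:euclidean} is exactly the statement $\nu(r)\le\nu(R)$. So the plan is to prove that $\nu$ is nondecreasing on $(0,R]$.

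The first step rewrites the integral over a ball as a boundary integral. Since $T$ is closed, $T\wedge\dd\dc\rho=\dd(T\wedge\dc\rho)$. Stokes' theorem then gives
\[
\int_{B_{\mathrm E}(0,r)}T\wedge\dd\dc\rho=\int_{\partial B_{\mathrm E}(0,r)}T\wedge\dc\rho .
\]
Next we use the identity $\dd\dc\log\rho=\rho^{-1}\dd\dc\rho-\rho^{-2}\dd\rho\wedge\dc\rho$. The pullback of $\dd\rho$ to the sphere $\{\rho=r^2\}$ vanishes, so on $\partial B_{\mathrm E}(0,r)$ the forms $T\wedge\dc\rho/r^2$ and $T\wedge\dc\log\rho$ agree. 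Hence
\[
\nu(r)=\int_{\partial B_{\mathrm E}(0,r)}T\wedge\dc\log\rho .
\]

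The second step compares two radii $0<r_1<r_2\le R$. Apply Stokes on the annulus $A=\{r_1<|z|<r_2\}$, where $\log\rho$ is smooth. This gives
\[
\nu(r_2)-\nu(r_1)=\int_A T\wedge\dd\dc\log\rho .
\]
The form $\dd\dc\log\rho$ is a smooth positive $(1,1)$-form on $\C^n\setminus\{0\}$, since $\log|z|^2$ is plurisubharmonic; it is the pullback of the Fubini--Study form. The form $T$ is a positive $(n-1,n-1)$-form. The wedge of a positive $(n-1,n-1)$-form with a positive $(1,1)$-form is a nonnegative multiple of the volume form. To check this, write the $(1,1)$-form pointwise as $\sum_j\lambda_j\,\sqrt{-1}\,\dd z_j\wedge\dd\bar z_j$ with $\lambda_j\ge 0$, and use that each $T\wedge\sqrt{-1}\,\dd z_j\wedge\dd\bar z_j\ge0$ by positivity of $T$. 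Therefore $\nu(r_2)\ge\nu(r_1)$, so $\nu$ is monotone. Letting $r_2=R$ and $r_1=r$ yields \eqref{eq:euclidean}.

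The only delicate point is the pointwise positivity of $T\wedge\dd\dc\log\rho$, together with the sign and orientation conventions in Stokes for $\dc=\sqrt{-1}(\bar\partial-\partial)$. With $\dd\dc=2\sqrt{-1}\partial\bar\partial$, the form $\dd\dc\rho$ is positive, and the boundary terms carry the outward orientation consistently. Smoothness of $T$ on a neighborhood of the closed ball justifies every application of Stokes. The singularity of $\log\rho$ at the origin never enters, because we only work on annuli and spheres. If one wishes to avoid $\log\rho$ entirely, one can instead differentiate $\nu$ in $r$ directly using the coarea formula; the derivative again reduces to the same nonnegative quantity $T\wedge\dd\dc\log\rho$ integrated over spheres.
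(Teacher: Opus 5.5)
Your argument is correct and is essentially the paper's proof. Stokes turns the ball integral into $\int_{\partial B}T\wedge\dc\rho$, one rewrites $r^{-2}\dc\rho$ as $\dc\log\rho$ on the sphere, and Stokes on the annulus together with pointwise positivity of $T\wedge\dd\dc\log\rho$ gives $\nu(R)-\nu(r)\ge 0$. (The $\dd\dc\log\rho$ identity you cite is not the relevant one at that step; what is needed is simply $\dc\log\rho=\rho^{-1}\dc\rho$ everywhere, with $\rho=r^2$ on the sphere, but this does not affect the argument.)
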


\begin{proof}
Let 
$m(t)=\int_{\{ |z-a|^2<t^2\}}T\wedge\dd\dc \left( |z-a|^2\right).$
Since $\dd T=0$, Stokes' theorem yields
\[
m(t)=\int_{\{|z-a|^2=t^2\}}T\wedge\dc \left( |z-a|^2\right).
\]
On the sphere $|z-a|^2=t^2$ we have
$\dc\log \left( |z-a|^2\right)=t^{-2}\dc \left( |z-a|^2 \right)$. Another application of Stokes' theorem on the annulus, yields
\begin{equation*}\label{eq:monotonicity}
\frac{m(R)}{R^2}-\frac{m(r)}{r^2}
=\int_{\{r^2< |z-a|^2<R^2\}}T\wedge\dd\dc\log \left( |z-a|^2\right) \geq 0.
\end{equation*}
Thus the claim is proved.
\end{proof}

\begin{lemma}\label{lem:global}
There exists a constant $C_0=C_0(M,J,\omega_0)>0$ such that every smooth
positive closed $(n-1,n-1)$-form $T$ satisfies
\begin{equation*}\label{eq:global}
\int_{B_0(p,r)}T\wedge\omega_0
\leq C_0r^2\int_M T\wedge\omega_0
\end{equation*}
for every $p\in M$ and $ r>0$.
\end{lemma}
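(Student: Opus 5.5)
The plan is to reduce Lemma~\ref{lem:global} to the Euclidean monotonicity of Lemma~\ref{lem:euclidean} by a compactness argument with finitely many holomorphic charts, treating small and large radii separately.

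\textbf{Choice of charts.} By compactness of $M$ we choose finitely many holomorphic charts $\varphi_j:U_j\to\C^n$, $j=1,\dots,J$, and a radius $R_0>0$ with the following properties. For every $p\in M$ there is an index $j$ with $B_0(p,2r_0)\subset U_j$ for a fixed $r_0>0$ (a Lebesgue number). On each $U_j$ the metric $g_0$ is uniformly equivalent to the pulled-back Euclidean metric, with a constant $\Lambda\ge1$ independent of $j$, so that in particular
\[
\Lambda^{-1}\,\dd\dc|z|^2\le \omega_0\le \Lambda\,\dd\dc|z|^2
\]
as positive $(1,1)$-forms on $\varphi_j(U_j)$. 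Since $T$ is a positive $(n-1,n-1)$-form, wedging preserves these inequalities; this is the only place positivity of $T$ enters besides Lemma~\ref{lem:euclidean}. We may also assume the Euclidean balls $B_{\mathrm E}(\varphi_j(p),R_0)$ have closures inside $\varphi_j(U_j)$ whenever $B_0(p,r_0)$ is the relevant neighborhood. The bi-Lipschitz comparison of distances gives $\varphi_j(B_0(p,r))\subset B_{\mathrm E}(\varphi_j(p),\Lambda' r)$ for $r\le r_1$, with $r_1$ small and $\Lambda'$ uniform.

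\textbf{Small radii.} For $r\le r_1:=\min(r_0,R_0/\Lambda')$ we pick $j$ as above and write $a=\varphi_j(p)$. Then
\[
\int_{B_0(p,r)}T\wedge\omega_0\le\Lambda\int_{B_{\mathrm E}(a,\Lambda' r)}T\wedge\dd\dc|z-a|^2
\le\Lambda\frac{\Lambda'^2r^2}{R_0^2}\int_{B_{\mathrm E}(a,R_0)}T\wedge\dd\dc|z-a|^2,
\]
using Lemma~\ref{lem:euclidean} for the second inequality. Since $\dd\dc|z-a|^2=\dd\dc|z|^2$, the comparison in the other direction gives
\[
\int_{B_{\mathrm E}(a,R_0)}T\wedge\dd\dc|z-a|^2\le \Lambda\int_M T\wedge\omega_0 .
\]
The integrand is nonnegative, so extending the domain to all of $M$ is harmless. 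This yields the desired bound with $C_0=\Lambda^2\Lambda'^2/R_0^2$.

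\textbf{Large radii.} For $r>r_1$ the estimate is trivial:
\[
\int_{B_0(p,r)}T\wedge\omega_0\le\int_M T\wedge\omega_0\le r_1^{-2}r^2\int_MT\wedge\omega_0 .
\]
Taking $C_0$ to be the maximum of the two constants finishes the proof.

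\textbf{Main obstacle.} The only delicate point is the uniformity of the chart data: a single $\Lambda$, $\Lambda'$, $R_0$, $r_1$ must work for all $p$. This follows from finiteness of the cover and compactness, by shrinking the charts to relatively compact subsets. Everything else is the direct comparison above, which relies on $T\wedge\beta\ge0$ for positive $(1,1)$-forms $\beta$.
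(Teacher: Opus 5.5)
Your proposal is correct and follows essentially the same route as the paper. The paper also uses finitely many holomorphic charts with uniform constants comparing $\omega_0$ and $\dd\dc|z|^2$, applies Lemma~\ref{lem:euclidean} on a Euclidean ball of fixed radius for small $r$, and uses the trivial bound for large $r$. This gives the same constant $\max\{\Lambda^2\Lambda'^2R_0^{-2},\,r_1^{-2}\}$. The only difference is cosmetic: you select charts via a Lebesgue number, whereas the paper uses compact sets $K_j\Subset U_j$ covering $M$, and the two are equivalent.
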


\begin{proof}
Choose finitely many holomorphic charts, relatively compact open subsets
$U_j$ with holomorphic coordinates $z_j$, and compact subsets $K_j\Subset U_j$ covering
$M$. Write 
$\beta_j=\dd\dc \left( |z_j|^2 \right)$. It follows from compactness of $M$ that there exists a uniform $b\geq 1$ such that
$b^{-1}\beta_j\leq\omega_0\leq b\beta_j $ holds on all $U_j.$
There are also uniform constants $R>0$, $L>0$, and $r_*>0$ such that for
$p\in K_j$,
$\overline{B_{\mathrm E}(z_j(p),R)}\subset z_j(U_j),
z_j(B_0(p,r))\subset B_{\mathrm E}(z_j(p),Lr),
 Lr<R$ hold for every $r \in (0, r_*)$. 
For $p\in K_j$ and $0<r<r_*$,
Lemma~\ref{lem:euclidean}
implies
\begin{align*}
\int_{B_0(p,r)}T\wedge\omega_0
\leq b\int_{B_{\mathrm E}(z_j(p),Lr)} \left(z_j\right)_*T\wedge\beta_j \leq b\frac{L^2r^2}{R^2}
\int_{B_{\mathrm E}(z_j(p),R)}\left(z_j\right)_* T\wedge\beta_j \leq\frac{b^2 L^2 r^2}{R^2} \int_M T\wedge\omega_0.
\end{align*}
For $r\geq r_*$, it follows that 
\[
\int_{B_0(p,r)}T\wedge\omega_0\leq  \int_M T\wedge\omega_0 \leq r_*^{-2}r^2 \int_M T\wedge\omega_0.
\]
Thus $C_0=\max\{b^2L^2 R^{-2},r_*^{-2}\}$ suffices to yield the lemma.
\end{proof}

Let $L \to M$ be a holomorphic line bundle over a closed K\"ahler manifold $M$. 
 If $h$ is a smooth Hermitian metric on a holomorphic
line bundle $L$ and $e$ is a local nowhere-vanishing holomorphic frame,
write $H=h(e,e)=|e|_h^2.$
With our convention, the curvature of the Chern connection is the
globally defined form $-\partial\bar\partial\log H$ with 
$\left[-\frac{\sqrt{-1}}{2\pi}\partial\bar\partial\log H\right]=c_1(L).$
For a real $(1,1)$-form $\eta$, we write
$\operatorname{tr}_{\omega}\eta$ for its trace with respect to
$\omega$. Namely,  if
$\omega=\sqrt{-1} g_{j\bar k}\,dz^j\wedge d\bar z^k$ and
$\eta=\sqrt{-1}\eta_{j\bar k}\,dz^j\wedge d\bar z^k$, then
$\operatorname{tr}_{\omega}\eta=g^{j\bar k}\eta_{j\bar k}$.
Assume that there exists a nonconstant holomorphic map $\Phi: M \to \mathbb{CP}^N$. 
Since $\Phi$ is not constant, then $d\Phi$ does not vanish identically on $M$. 
Choose a Hermitian metric $h_{\mathrm{FS}}$ on $\mathcal O(1)$
and, in a local holomorphic frame, write
$H_{\mathrm{FS}}=|e|_{h_{\mathrm{FS}}}^2$ with the positive curvature form given by 
$\eta_{\mathrm{FS}} =-\sqrt{-1}\partial\bar\partial\log H_{\mathrm{FS}},$
normalized by $[\eta_{\mathrm{FS}}]=2\pi c_1(\mathcal O(1))$.
The pullback $\eta=\Phi^*\eta_{\mathrm{FS}}$
is a smooth closed semipositive $(1,1)$-form representing
$2\pi c_1(L)$. At any point where $d\Phi\neq0$, $\eta \not=0$. For any K\"ahler representative $\omega$ of $\alpha$,
$\operatorname{tr}_{\omega}\eta$ is thus nonnegative everywhere and strictly
positive somewhere. It follows that
\[ 
2\pi \int_M c_1(L)\wedge\frac{\omega^{n-1}}{(n-1)!} =
\int_M\eta\wedge\frac{\omega^{n-1}}{(n-1)!}
=\int_M \left(\operatorname{tr}_{\omega} \eta \right) \,\dd\mu_\omega>0.
\]
Consequently
\[
c_\Phi = \frac{\int_M (\operatorname{tr}_{\omega}\eta)\,\dd\mu_\omega}
     {\int_M\dd\mu_\omega}
=n\frac{\int_M\eta\wedge\omega^{n-1}}{\int_M\omega^n}
>0.
\]
By Stokes' theorem,  
the value of $c_\Phi$ is independent of the representative $\omega$.

\begin{lemma}\label{lem:trace}
Let $\omega\in\K_\alpha$.
Then there is a smooth Hermitian metric $h$ on $L$ such that, in every local
holomorphic frame with $H=|e|_h^2$,
\[
\operatorname{tr}_{\omega}
   \bigl(-\sqrt{-1}\partial\bar\partial\log H\bigr)=c_\Phi.
\]
\end{lemma}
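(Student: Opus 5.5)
We will obtain $h$ by conformally changing the pulled-back Fubini--Study metric. Start from $h_1:=\Phi^*h_{\mathrm{FS}}$, a smooth Hermitian metric on $L=\Phi^*\mathcal O(1)$. In a local holomorphic frame $e$ of $L$, its weight is $H_1=|e|_{h_1}^2$, and its curvature form is
\[
-\sqrt{-1}\partial\bar\partial\log H_1=\eta=\Phi^*\eta_{\mathrm{FS}},
\]
a global form independent of the frame. We look for $h=e^{-\varphi}h_1$ with $\varphi\in C^\infty(M,\R)$. Then $H=e^{-\varphi}H_1$, and
\[
-\sqrt{-1}\partial\bar\partial\log H=\eta+\sqrt{-1}\partial\bar\partial\varphi .
\]
Since $\log H$ changes by $\log|f|^2$ under a change of holomorphic frame $e\mapsto fe$, and $\log|f|^2$ is pluriharmonic, this form and its trace are again frame-independent.

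The required identity therefore becomes the scalar equation
\[
\operatorname{tr}_\omega\bigl(\sqrt{-1}\partial\bar\partial\varphi\bigr)=c_\Phi-\operatorname{tr}_\omega\eta .
\]
On a Kähler manifold, $\operatorname{tr}_\omega(\sqrt{-1}\partial\bar\partial\varphi)=g^{j\bar k}\partial_j\partial_{\bar k}\varphi$ is a fixed positive multiple of the negative of the nonnegative Laplace--Beltrami operator of $g_\omega$. With the paper's conventions it equals $-\tfrac12\Delta_\omega\varphi$ when $\Delta_\omega$ is nonnegative. So we must solve
\[
\Delta_\omega\varphi=-2\bigl(c_\Phi-\operatorname{tr}_\omega\eta\bigr)=:f .
\]
The right-hand side $f$ is smooth, because $\eta$ and $\omega$ are smooth.

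The only point to check is the solvability condition, and this is already provided by the computation preceding the lemma. Because $M$ is compact and connected, $\Delta_\omega$ is self-adjoint with kernel the constants. Fredholm theory (or the spectral decomposition together with elliptic regularity) then gives a smooth solution whenever $\int_M f\,\dd\mu_\omega=0$. This condition is exactly
\[
\int_M\operatorname{tr}_\omega\eta\,\dd\mu_\omega=c_\Phi\int_M\dd\mu_\omega ,
\]
which is how $c_\Phi$ was defined, via $\int_M(\operatorname{tr}_\omega\eta)\,\dd\mu_\omega=\int_M\eta\wedge\omega^{n-1}/(n-1)!$. We take a smooth solution $\varphi$, unique up to an additive constant, and set $h=e^{-\varphi}h_1$.

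None of the steps is a real obstacle. The only care needed is bookkeeping of normalizations: the factor relating $\operatorname{tr}_\omega\sqrt{-1}\partial\bar\partial$ to $\Delta_\omega$, and the sign conventions for $\dc$ and the curvature. An error there only rescales $\varphi$ and does not affect existence.
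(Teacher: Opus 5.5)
Your proposal is correct and follows essentially the paper's argument: conformally rescale a reference metric by $e^{-\varphi}$, solve the Poisson equation $\Delta_\omega\varphi=2(\operatorname{tr}_\omega\eta-c_\Phi)$, and get solvability from the fact that $c_\Phi$ is the $\mu_\omega$-average of the trace of the curvature. The paper starts from an arbitrary smooth metric $h_0$ rather than your choice $\Phi^*h_{\mathrm{FS}}$. Your normalization $\operatorname{tr}_\omega(\sqrt{-1}\partial\bar\partial\varphi)=-\tfrac12\Delta_\omega\varphi$ matches the paper's conventions.
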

\begin{proof}
Choose any smooth Hermitian metric $h_0$. 
Write $H_0=|e|_{h_0}^2$  and it follows that
\[
\frac{\displaystyle\int_M \operatorname{tr}_{\omega}
  \bigl(-\sqrt{-1}\partial\bar\partial\log H_0\bigr) \,\dd\mu_\omega}
     {\displaystyle\int_M\dd\mu_\omega}
=\frac{\displaystyle\int_M
                 \bigl(-\sqrt{-1}\partial\bar\partial\log H_0\bigr)\wedge
                 \frac{\omega^{n-1}}{(n-1)!}}
       {\Vol_\omega(M)}
=\frac{2\pi\deg_\omega(L)}{\Vol_\omega(M)}.
\]
Thus 
\begin{equation*}\label{eq:poisson}
\Delta_\omega f=2( \operatorname{tr}_{\omega}
  \bigl(-\sqrt{-1}\partial\bar\partial\log H_0\bigr) -c_\Phi)
\end{equation*}
admits a  smooth real solution $f$ on $M$. 
It is easy to check that  $h=e^{-f}h_0$ is the desired Hermitian metric.
\end{proof}

The following lemma is a standard result in complex geometry. 
\begin{lemma}\label{lem:sections}
For 
every integer $q\geq1$,
\begin{equation*}\label{eq:sections}
\dim_{\mathbb C}H^0(M,L^q) 
\geq\frac{q^d}{d!}.
\end{equation*}
\end{lemma}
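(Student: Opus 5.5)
The plan is to produce many linearly independent sections of $L^q$ by pulling back sections from projective space, and to count them through the image variety $X=\Phi(M)\subset\CP^N$. By Remmert and Chow, as recalled above, $X$ is an irreducible projective variety of dimension $d$; irreducibility holds because $M$ is connected. Pullback gives a linear map
\[
\Phi^*: H^0(\CP^N,\mathcal O(q))\to H^0(M,L^q),\qquad L^q=\Phi^*\mathcal O(q).
\]
A homogeneous polynomial $P$ of degree $q$ pulls back to the zero section exactly when $P\circ\Phi\equiv 0$, that is, when $P$ vanishes on $X$. So $\Phi^*$ induces an injection of the degree-$q$ part $S(X)_q$ of the homogeneous coordinate ring of $X$ into $H^0(M,L^q)$. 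It therefore suffices to show $\dim_{\C} S(X)_q\geq q^d/d!$.

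To bound $S(X)_q$ from below, I will choose a linear projection realizing $X$ as a branched cover of $\CP^d$. By Noether normalization, after a linear change of homogeneous coordinates we can pick linear forms $\ell_0,\dots,\ell_d$ whose common zero locus misses $X$. Restricting to $X$, they give a finite surjective morphism $\pi:X\to\CP^d$; surjectivity holds because the image is closed of dimension $d$. I claim the monomials of degree $q$ in $\ell_0,\dots,\ell_d$ are linearly independent in $S(X)_q$. Suppose a nonzero polynomial $Q(\ell_0,\dots,\ell_d)$ of degree $q$ vanished on $X$. Then $Q$ would vanish on $\pi(X)=\CP^d$, which is impossible for a nonzero polynomial in $d+1$ variables. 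Hence
\[
\dim S(X)_q\geq\binom{q+d}{d}=\frac{(q+1)(q+2)\cdots(q+d)}{d!}\geq\frac{q^d}{d!}.
\]

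An alternative route avoids Noether normalization. Pick a point $p\in M$ where $\operatorname{rank}d\Phi_p=d$. Near $p$ the image is locally a $d$-dimensional complex submanifold of $X$, and we can choose affine coordinates of $\CP^N$ such that $d$ of the affine coordinates $w_1,\dots,w_d$ restrict to local coordinates on $X$ at $\Phi(p)$. The monomials $w^\beta$ with $|\beta|\leq q$, homogenized with $w_0^{q-|\beta|}$, are then independent as functions on an open subset of $X$. Their pullbacks are independent sections of $L^q$ because $\Phi$ is open onto its image near $p$. This again gives $\binom{q+d}{d}$ independent sections.

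The only step requiring care is the projection argument, namely that the linear projection from a generic $(N-d-1)$-plane disjoint from $X$ maps $X$ onto $\CP^d$. I would cite it as the standard projection lemma, or use the local-coordinate version instead, which is elementary given the constant rank theorem at a point of maximal rank.
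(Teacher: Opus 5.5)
Your main argument is correct and is essentially the paper's proof: a generic linear projection $\pi:\Phi(M)\to\CP^d$ is finite and surjective, so the degree-$q$ monomials in $\Phi^*\ell_0,\dots,\Phi^*\ell_d$ are linearly independent in $H^0(M,L^q)$. This gives $\binom{q+d}{d}\geq q^d/d!$ independent sections; your local-coordinate alternative also works, provided you take $w_1,\dots,w_d$ as coordinates on the local image $\Phi(U)$ of a small neighborhood $U$ of $p$ (not on $X$ at $\Phi(p)$, where $X$ may be singular or have several branches), because a polynomial in $w_1,\dots,w_d$ vanishing on $\Phi(U)$ then vanishes on an open subset of $\C^d$.
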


\begin{proof}
Since $\dim_{\mathbb C} \Phi(M) =d$, 
generically chosen homogeneous linear functions $\ell_0,\ldots,\ell_d$ in $\mathbb{CP}^N$ with no common
zero on $\Phi(M)$ define a linear projection
$\pi:\Phi(M) \longrightarrow\mathbb P^d$
given by 
$\pi(y) =[\ell_0(y):\cdots:\ell_d(y)].$
Then $\pi$ is finite. 
The image of $\pi$ is therefore a closed subset of $\mathbb P^d$ of dimension
$d$, and hence $\pi$ is surjective. Consequently,
$F=\pi\circ\Phi:M\to\mathbb P^d$ is also surjective.
Let $s_j=\Phi^*\ell_j\in H^0(M,L)$. For each multi-index
$\nu=(\nu_0,\ldots,\nu_d)$ with nonnegative integer entries and
$|\nu|=\nu_0+\cdots+\nu_d=q$, 
$s^\nu=s_0^{\nu_0}\cdots s_d^{\nu_d}\in H^0(M,L^q)$. 
Let $\sum_{|\nu|=q}a_\nu s^\nu=0$ on $M$. Then the homogeneous polynomial
$P(W_0,\ldots,W_d)
=\sum_{|\nu|=q}a_\nu W_0^{\nu_0}\cdots W_d^{\nu_d}$
vanishes on $F(M)=\mathbb P^d$. Thus $P$ is the zero polynomial,
and thus all coefficients $a_\nu$ vanish. This  implies that these $\binom{q+d}{d}$ sections are linearly independent. 
 Therefore
$$ \dim_{\mathbb C}H^0(M,L^q) \geq\binom{q+d}{d} =\frac{(q+1)\cdots(q+d)}{d!} \geq\frac{q^d}{d!}.$$
\end{proof}

\section{Linear upper bound on general K\"ahler manifolds}

We first a state an elementary lemma.

\begin{lemma}\label{lem:covering}
There is an integer $N=N(M,g_0)$ such that every $g_0$-ball of radius
$r>0$ is covered by at most $N$ $g_0$-balls of radius $\frac r2$.
\end{lemma}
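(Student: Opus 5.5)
The plan is to prove this doubling-type covering property by a volume-comparison argument on the compact Riemannian manifold $(M,g_0)$. We treat small radii and large radii separately.

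\emph{Small radii.} Since $M$ is compact and smooth, there are constants $r_1>0$ and $A\geq 1$ with
\[
A^{-1}s^{2n}\leq \Vol_{g_0}(B_0(x,s))\leq A s^{2n}
\]
for all $x\in M$ and $0<s\leq r_1$. To obtain these, we use finitely many coordinate charts in which $g_0$ is uniformly comparable to the Euclidean metric, exactly as in the proof of Lemma~\ref{lem:global}. Alternatively, one can use Bishop--Gromov with a lower Ricci bound and a positive injectivity radius.

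Fix a ball $B_0(p,r)$ with $r\leq r_1$, and take a maximal $\frac r2$-separated set $\{x_i\}$ inside $B_0(p,r)$.
\begin{itemize}[leftmargin=*]
\item By maximality, the balls $B_0(x_i,\frac r2)$ cover $B_0(p,r)$.
\item The balls $B_0(x_i,\frac r4)$ are pairwise disjoint.
\item Each $B_0(x_i,\frac r4)$ lies in $B_0(p,\frac{5r}{4})$.
\end{itemize}
For the volume bound on the larger ball we need $\frac{5r}{4}\leq r_1$, so we first restrict to $r\leq \frac45 r_1$. Comparing volumes then gives
\[
\#\{x_i\}\cdot A^{-1}\left(\frac r4\right)^{2n}\leq A\left(\frac{5r}{4}\right)^{2n},
\]
so $\#\{x_i\}\leq A^2 5^{2n}$.

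\emph{Large radii.} For $r\geq r_2:=\frac45 r_1$, cover $M$ once and for all. Take a finite maximal $\frac{r_2}{2}$-separated set in $M$. Its $\frac{r_2}{2}$-balls cover $M$, and its cardinality $N_1$ is bounded by the same volume argument, now using $\Vol_{g_0}(M)$. Any ball $B_0(p,r)$ with $r\geq r_2$ is covered by these $N_1$ balls of radius $\frac{r_2}{2}\leq\frac r2$. Enlarging each one to a concentric ball of radius $\frac r2$ keeps the covering. Hence $N=\max\{A^25^{2n},N_1\}$ works.

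The only step needing care is the uniform two-sided volume estimate for small balls. This is routine by compactness, so I expect no real obstacle. The proof is purely metric and does not use the Kähler structure.
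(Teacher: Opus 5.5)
Your proof is correct and follows the paper's argument essentially step for step. It uses uniform two-sided volume bounds comparable to $s^{2n}$ for small balls. It then takes a maximal $\frac r2$-separated set in $B_0(p,r)$, whose $\frac r4$-balls are disjoint and lie in $B_0(p,\frac{5r}{4})$, which yields a bound of $5^{2n}$ times the ratio of the volume constants. For large radii it uses a fixed finite cover of $M$ by small balls. Your restriction $r\le\frac45 r_1$ plays the role of the paper's $r\le\frac{s_0}{2}$. Your volume bound on $N_1$ is harmless but unnecessary, since compactness alone already gives a finite cover.
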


\begin{proof}
Since $M$ is compact, 
there exist constants
$a,b,s_0>0$ such that
$as^{2n} \leq\Vol_{g_0}(B_0(x,s))\leq bs^{2n}$
holds uniformly for  $x\in M,\ 0<s\leq s_0$.
For $r\leq s_0/2$, choose a maximal set $x_1,\ldots,x_q$ in $B_0(p,r)$ such that \(d_0(x_i,x_j)\ge \frac r2\) for all $i \not= j$. 
It follows that $B_0(p,r) \subset \cup_{j=1}^q B_0(x_j, \frac r2)$.
Moreover, $B_0(x_i, \frac r4) \cap B_0(x_j, \frac r4) =\emptyset$ for all $i \not= j$ and $B_0(x_i, \frac r4) \subset B_0(p, \frac{5r}{4})$ for all $i$. 
Therefore $qa(\frac r4)^{2n}\leq b(\frac{5r}4)^{2n}$, which implies $ q\leq  5^{2n} \frac ba.$
For $r> \frac{s_0}2$, take a finite cover of $M$ by $N'$ $g_0$-balls of radius
$\frac{s_0}4$. So the union of $N'$ $g_0$-balls of radius $\frac r2$ still covers $M$. 
Therefore $N=\max\{N',  5^{2n} \frac ba\}$ is the desired upper bound in the lemma.
\end{proof}

For a K\"ahler form $\omega$, define
$T_\omega=\frac{\omega^{n-1}}{(n-1)!}.$
Recall the Dirichlet energy of a function $f$ is given by $E_\omega(f)=
\int_M|\dd f|_{g_\omega}^2 \frac{\omega^n}{n!}$. 
Following the ideas in \cite{K93, GY99, K20a, K20b, SX22}, we construct test functions on following annuli 
$A=A(p;r,R)=\{x:r\leq d_0(p,x)<R\}$ with  $2A=\{x:\frac r2 \leq d_0(p,x)<2R\}$. 
If $r=0$, the annulus is a ball.

\begin{lemma}\label{lem:cutoff}
For every such annulus $A$ there is a Lipschitz function
$f_A:M\to[0,1]$ satisfying
$f_A=1\text{ on }A, \{f_A\ne 0\}\subset 2A$
and
\begin{equation}\label{eq:cutoffenergy}
E_\omega(f_A)
\leq 8C_0\int_M\omega_0\wedge T_\omega 
\end{equation}
for some constant $C_0 >0$.
\end{lemma}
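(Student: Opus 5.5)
Let $\rho=d_0(p,\cdot)$, which is $1$-Lipschitz with respect to $g_0$. We take the standard piecewise linear cutoff in $\rho$:
\[
f_A(x)=\begin{cases}
\frac{2\rho(x)}{r}-1, & \frac r2\le \rho<r,\\
1, & r\le\rho<R,\\
2-\frac{\rho(x)}{R}, & R\le \rho<2R,\\
0, & \text{otherwise}.
\end{cases}
\]
If $r=0$, only the last two pieces occur. Then $f_A$ is Lipschitz with values in $[0,1]$, equals $1$ on $A$, and its support lies in $2A$. Almost everywhere we have $|\dd f_A|_{g_0}\le 2/r$ on the inner shell $\{\frac r2\le\rho<r\}$, $|\dd f_A|_{g_0}\le 1/R$ on the outer shell $\{R\le\rho<2R\}$, and $\dd f_A=0$ elsewhere.

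The key identity is that the Dirichlet energy in complex dimension $n$ is controlled by the mixed form $\omega_0\wedge T_\omega$. For a real function $f$, pointwise
\[
|\dd f|^2_{g_\omega}\frac{\omega^n}{n!}=\dd f\wedge \dc f\wedge T_\omega ,
\]
up to the normalization of $\dc$. I would verify this by diagonalizing $\omega$ at a point. Since $T_\omega$ is a positive $(n-1,n-1)$-form and $\dd f\wedge\dc f$ is a semipositive $(1,1)$-form, we can compare $\dd f\wedge \dc f\le |\dd f|^2_{g_0}\,\omega_0$ as $(1,1)$-forms, with a constant depending only on conventions. To check this, write $\dd f\wedge\dc f=2\sqrt{-1}\partial f\wedge\bar\partial f$ and note $2|\partial f|^2_{\omega_0}=|\dd f|_{g_0}^2$ in $\omega_0$-orthonormal coordinates, so the rank-one form is dominated by its trace times $\omega_0$. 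Wedging with $T_\omega\ge0$ gives
\[
|\dd f_A|^2_{g_\omega}\dd\mu_\omega\le |\dd f_A|^2_{g_0}\,\omega_0\wedge T_\omega .
\]

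Integrating this bound over the two shells gives
\[
E_\omega(f_A)\le \frac{4}{r^2}\int_{B_0(p,r)}\omega_0\wedge T_\omega+\frac1{R^2}\int_{B_0(p,2R)}\omega_0\wedge T_\omega .
\]
Now apply Lemma~\ref{lem:global} with $T=T_\omega$, which is smooth, positive and closed since $\omega$ is Kähler. This bounds the first term by $4C_0\int_M\omega_0\wedge T_\omega$ and the second by $4C_0\int_M\omega_0\wedge T_\omega$. The total is $8C_0\int_M\omega_0\wedge T_\omega$, which is \eqref{eq:cutoffenergy} with $C_0$ the constant of Lemma~\ref{lem:global}. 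When $r=0$, only the second term appears.

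The main obstacle is the pointwise comparison step: getting the constant factor exactly right under the paper's conventions $\dd\dc=2\sqrt{-1}\partial\bar\partial$ and $g_\omega(X,Y)=\omega(X,JY)$, so that the inequality holds with coefficient $1$. If a convention factor such as $2$ appears, it is absorbed by adjusting $C_0$, since the statement only asks for some constant $C_0>0$. The rest of the argument is the Lipschitz calculus for $d_0$ and the direct use of Lemma~\ref{lem:global}.
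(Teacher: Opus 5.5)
Your proposal is correct and is essentially the paper's proof. It uses the same piecewise-linear cutoff $h(d_0(p,\cdot))$ and the same pointwise bound $|\dd f|_{g_\omega}^2\,\dd\mu_\omega=\dd f\wedge\dc f\wedge T_\omega\le|\dd f|_{g_0}^2\,\omega_0\wedge T_\omega$; your check that the constant is exactly $1$ under the paper's conventions is right. It then applies Lemma~\ref{lem:global} to $T_\omega$ on $B_0(p,r)$ and $B_0(p,2R)$ to get $4C_0+4C_0$, with $C_0$ independent of $\omega$.

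The only slip is verbal: when $r=0$ it is the inner ramp that disappears, while the plateau $f_A=1$ on $\{\rho<R\}$ remains, as in the paper.
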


\begin{proof}
For any $r>0$, 
let \[
h(t)=\begin{cases}
0, &0\leq t\leq \frac r2,\\
\frac{2t}{r}-1, &\frac r2<t<r,\\
1, &r\leq t\leq R,\\
2-\frac tR, &R<t<2R,\\
0,&t\geq 2R
\end{cases}
\]
and let $f_A(x)=h(d_0(p,x))$ be a Lipschitz function and thus 
$f_A \in H^1(M,g_\omega)$ for each fixed smooth K\"ahler metric $\omega$. Then 
$f_A=1\text{ on }A$, $\{f_A\ne 0\}\subset 2A$. 
It follows that
\begin{align*}
E_\omega(f_A)
&=\int_M \dd f_A \wedge\dc f_A \wedge T_\omega \\
&\leq \int_M |\dd f_A |_{g_0}^2 \omega_0 \wedge T_\omega \\
&\leq\frac{4}{r^2}\int_{B_0(p,r)}\omega_0\wedge T_\omega +\frac{1}{R^2}\int_{B_0(p,2R)}\omega_0\wedge T_\omega\\
&\leq 4C_0\int_M\omega_0\wedge T_\omega+4C_0\int_M\omega_0\wedge T_\omega,
\end{align*}
where the second inequality follows from Lemma \ref{lem:global}. 
For $r=0$, set $h=1$ on $[0,R]$, $h(t)=2-\frac tR$ for $t \in [R, 2R]$ and $h(t)=0$ for $t > 2R$. Let $f_A(x)=h(d_0(p,x))$. Then by the same argument, we have 
$$E_\omega(f_A) \leq  4C_0\int_M\omega_0 \wedge T_\omega.$$
\end{proof}

Before proving Theorem~\ref{thm:main}, we recall a  crucial estimate due to Grigor'yan-Netrusov-Yau  (cf. Theorem 1.1 in \cite{GNY}). 

\begin{theorem}[Grigor'yan--Netrusov--Yau]\label{thm:gny}
Let $(X,d)$ be a metric space satisfying the following covering property: there exists a constant $N$ such that any metric ball of radius $r$ in $X$ can be covered by at most $N$ balls
of radius $\frac r2$. Let all metric balls in $X$ be precompact sets and let $\mu$  be a  non-atomic Radon measure on
$X$ satisfying \(0<\mu(X)<\infty\). Then for 
 every integer $\ell\geq 1$, there are annuli
$A_1,\ldots,A_\ell$ satisfying
\[
\mu(A_i)\geq c(N)\frac{\mu(X)}{\ell}
\]
and  $(2A_i ) \cap (2A_j )=\emptyset$ for $i \not= j$. Here $c(N)$ is a positive constant depending only on $N$.
\end{theorem}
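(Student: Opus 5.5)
Since Theorem~\ref{thm:gny} is cited from \cite{GNY}, the paper does not prove it. If I had to prove it, I would use a multi-scale selection argument that uses only the covering property, precompactness and non-atomicity. Fix $\ell$ and set $v=\varepsilon\mu(X)/\ell$, with $\varepsilon=\varepsilon(N)$ small to be chosen.

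\emph{Step 1: balls at a critical radius.} For $x\in X$ put $r(x)=\inf\{r>0:\mu(B(x,r))\geq v\}$. Non-atomicity together with precompactness of balls (so $\mu$ is finite on balls) gives $r(x)>0$. Minimality of $r(x)$ combined with the covering property should give an upper bound $\mu(B(x,r(x)))\leq Nv$. The reason is that $B(x,r(x))$ is covered by $N$ balls $B(y_i,r(x)/2)$, and one wants each of these to have measure $<v$. This is not immediate: it needs $r(y_i)\ge r(x)/2$, and $y_i$ is a different centre. So I would first pass to nearly minimal values of $r(\cdot)$ in the region under consideration. Alternatively I would compare via $B(y_i,r(x)/2)\subset B(x,3r(x)/2)$ and iterate the covering once more, at the cost of replacing $N$ by $N^2$.

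\emph{Step 2: greedy selection with a dichotomy.} Select centres inductively. Take $x_1$ with $r(x_1)$ nearly minimal among the remaining points, record the ball $B_1=B(x_1,r_1)$, and delete the enlarged ball $B(x_1,8r_1)$ so that later doubled sets stay disjoint from $2B_1$. At each stage one of two things happens.
\begin{itemize}
\item The deleted mass $\mu(B(x_1,8r_1))$ is at most $C(N)v$, by the bound from Step~1 and a bounded number of covering iterations. In this case the step is cheap.
\item The deleted mass is large, because $x_1$ is a point of heavy concentration at scales above $r_1$. In this case I switch to annuli centred at $x_1$. Choose radii $\rho_0<\rho_1<\cdots$ with $\rho_{j+1}\geq 4\rho_j$ so that the doubled annuli $\{\rho_j/2\le d<2\rho_{j+1}\}$ are pairwise disjoint for alternate indices. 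Pick the $\rho_j$ by mass quantiles, i.e.\ $\mu(A(x_1;\rho_j,\rho_{j+1}))\approx v$. Any mass trapped in a thin shell of ratio $4$ is controlled by covering that shell with boundedly many balls of radius comparable to $\rho_j$.
\end{itemize}

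\emph{Step 3: counting.} Count the total mass consumed per produced set (ball or annulus) and show it is at most $C(N)v$. Then $\mu(X)/(C(N)v)\geq \ell$ sets are produced with pairwise disjoint doubles, which yields $c(N)=\varepsilon$ after choosing $\varepsilon\sim 1/C(N)$.

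\emph{Main obstacle.} I expect the hard part to be the interaction between the two regimes in Step~2. A concentration point can lose its annuli to deletions made at other centres, and the covering property gives no control on the measure of thin shells. I would first try the GNY device of processing scales from large to small. At each scale, either one finds many well-separated balls each of measure $\ge v$, or almost all mass lies in a bounded number of balls of comparable radius, and one then recurses inside them with annuli. The constant $c(N)$ then comes from bounding the number of recursion levels that can lose mass.
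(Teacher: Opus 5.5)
The paper gives no proof of this statement; it quotes Theorem~1.1 of \cite{GNY}. As a proof, your sketch has a genuine gap. It sits where you put the ``main obstacle'', and your diagnosis of it is partly off.

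In Step~1 the bound you worry about costs nothing. For open balls, $\mu(B(x,r(x)))\le v$ follows from continuity from below, with no covering argument. What you actually use is control of the enlargement $B(x_1,8r_1)$ that you delete, and ``iterating the covering'' cannot supply it. The covering property is purely metric. The only mass information available is $\mu(B(y,s))\le v$ for $s\le r(y)$. The inclusion $B(y_i,r/2)\subset B(x,3r/2)$ bounds the small balls by the large one, which is circular.

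Choosing $x_1$ with $r_1=r(x_1)\le 2\inf_X r$ does give $\mu(B(x_1,8r_1))\le N^4v$. Here $\inf_X r>0$ by non-atomicity, precompactness of balls and $\mu(X)<\infty$. So your ``expensive'' regime does not even arise at the first step, and the real failure lies elsewhere. Deleting $B(x_1,8r_1)$ protects $2B_1$ only from later sets of radius comparable to $r_1$. The next centre may have $r_2\gg r_1$, and then $2B_2=B(x_2,2r_2)$ can meet $2B_1$ even though $x_2\notin B(x_1,8r_1)$. Preventing this would require deleting $B(x_1,Cr_j)$ for every later radius $r_j$, and that mass is not bounded by any $C(N)v$. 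What forces annuli is disjointness of doubles across scales, not the mass removed at a single step; your dichotomy is triggered by the wrong quantity.

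The annular regime does not close the argument either. Covering a shell $\{\rho\le d(x_1,\cdot)<4\rho\}$ by boundedly many balls gives one ball carrying a fixed fraction of the shell's mass. It gives no upper bound on that mass. The shell may carry almost all of $\mu$, and a single sphere about $x_1$ may carry mass $\gg v$. So radii with $\mu(A(x_1;\rho_j,\rho_{j+1}))\approx v$ and $\rho_{j+1}\ge 4\rho_j$ need not exist.

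Such a heavy shell must then be split by sets centred elsewhere. Their doubles compete with the annuli around $x_1$ and with everything selected earlier, and you are led into a recursion. Unless you prove that the total loss over all recursion levels is bounded independently of $\ell$, the constant you obtain depends on $\ell$.

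Step~3's claim that each produced set consumes at most $C(N)v$ while all doubles stay disjoint is exactly the content of the Grigor'yan--Netrusov--Yau theorem, and the proposal gives no mechanism for it. As it stands this is a plan rather than a proof. A complete argument would have to reproduce their decomposition, or simply cite it, as the paper does.
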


\begin{proof}[Proof of Theorem~\ref{thm:main}]
Fix $\omega_0\in\K_\alpha$ and let $\omega\in\K_\alpha$ be an arbitrary K\"ahler form. 
Because of Lemma~\ref{lem:covering}, we can apply  Theorem~\ref{thm:gny} to $(X,d,\mu)=(M,d_0,\mu_\omega).$ More precisely, there are 
annuli
$A_1,\ldots,A_\ell$ satisfying
\begin{equation}\label{eq:annulusmass}
\mu_\omega(A_i)\geq c(N)\frac{\mu_\omega(M)}{\ell} = c(N)\frac{V}{\ell} 
\end{equation}
for all $i=1, \cdots, \ell$ 
and  $(2A_i ) \cap (2A_j )=\emptyset$ for $i \not= j$.
For each $i$, take $f_i=f_{A_i}$ as in Lemma~\ref{lem:cutoff}.
\eqref{eq:cutoffenergy} and
\eqref{eq:annulusmass} imply
\[
E_\omega(f_i) \leq 8C_0\int_M\omega_0\wedge T_\omega =    8nC_0V,
\]
and
\[
\|f_i\|_{L^2(\mu_\omega)}^2
\geq\mu_\omega(A_i)\geq c(N)\frac V\ell.
\]
Consequently,
\begin{equation}\label{eq:individualrayleigh}
\frac{E_\omega(f_i)}{\|f_i\|_{L^2(\mu_\omega)}^2}
\leq\frac{8nC_0}{c(N)}\ell.
\end{equation}
Let $F_\ell=\Span_{\R}\{f_1,\ldots,f_\ell\}\subset H^1(M,g_\omega;\R)$ be the $\ell$-dimensional real vector space. 
$\{f_1,\ldots,f_\ell\}$
have pairwise disjoint nonzero sets and positive
$L^2$ norms, so they are linearly independent and mutually
$L^2$-orthogonal. Moreover, since
each  
$\dd f_i=0$ almost
everywhere on $\{f_i=0\}$, 
it follows from (\ref{eq:individualrayleigh}), for every nonzero $f=\sum_i a_if_i\in F_\ell$ with $a_i\in\R$,  that
\begin{equation*}\label{eq:spacerayleigh}
\frac{E_\omega(f)}{\|f\|_{L^2(\mu_\omega)}^2}
=\frac{\sum_i a_i^2 E_\omega(f_i)}
{\sum_i a_i^2\|f_i\|_{L^2(\mu_\omega)}^2}
\leq\frac{8nC_0}{c(N)}\ell. 
\end{equation*}
By the min--max principle (cf. \cite{D95, C17}),
\[
\lambda_k(M,\omega)
=
\inf_{\substack{S\subset H^1(M,g_\omega)\\ \dim S=k+1}}
\ \sup_{0\ne f\in S}
\frac{E_\omega(f)}{\|f\|_{L^2(\mu_\omega)}^2} \leq  \sup_{0\ne f\in F_{k+1}}
\frac{E_\omega(f)}{\|f\|_{L^2(\mu_\omega)}^2} \leq \frac{8nC_0}{c(N)}(k+1). 
\]
The estimate is independent of $\omega\in\K_\alpha$ and thus we finish the proof of the theorem.
  \end{proof}

\section{Sharper bounds under a holomorphic map hypothesis}

The following result essentially follows from Theorem 3.1(i) in \cite{Bordoni1998} (cf. Theorems 3.3 and Theorems 5.2 in \cite{Bordoni1996} for a more general setting): a comparison theorem for Dirac and Schrödinger spectra, which builds on the Hilbertian method developed  in \cite{GM88, B94} . 

\begin{prop}[Bordoni]\label{thm:bordoni}
Let $(X,g)$ be a connected closed Riemannian manifold, and let
$(E,h)\to X$ be a smooth real vector bundle of rank $r\geq1$,
equipped with a smooth metric $h$ and a connection $D$ compatible with $h$.
Let $0\leq\mu_1\leq\mu_2\leq\cdots$
be the eigenvalues of the connection Laplacian $D^*D$, counting multiplicities, and let
$0=\lambda_0<\lambda_1\leq\lambda_2\leq\cdots$ 
be the eigenvalues of the  
Laplacian
$\Delta_g=\mathrm{d}^*\mathrm{d}$, counting 
multiplicities. Then
\begin{equation}\label{eq:bordoni}
\mu_N\geq
\frac{1}{8(r+1)^2}\,
\lambda_{\lfloor N/(r+1)\rfloor}
\end{equation}
holds  for every integer $N\geq1.$
\end{prop}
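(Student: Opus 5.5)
Following the Hilbertian method of Gallot--Meyer and B\'erard, the plan is a min--max comparison: from eigensections of $D^*D$ we build many scalar test functions with controlled energy, and then invoke the min--max principle for $\Delta_g$. Fix $N\geq1$ and let $\sigma_1,\dots,\sigma_N$ be $L^2$-orthonormal eigensections of $D^*D$ with eigenvalues $\mu_1,\dots,\mu_N$, and set $W=\Span\{\sigma_1,\dots,\sigma_N\}$. For $\sigma\in W$ we have
\[
\int_X|D\sigma|^2=\langle D^*D\sigma,\sigma\rangle\leq\mu_N\|\sigma\|^2 .
\]
By Kato's inequality $|\dd|\sigma||\leq|D\sigma|$, which uses metric compatibility of $D$, the function $|\sigma|$ then has Rayleigh quotient at most $\mu_N$. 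This by itself gives nothing: there are $N$ sections but they need not produce $N$ independent functions, and the norm map is nonlinear. The task is therefore to manufacture a linear space of functions of dimension roughly $N/(r+1)$ whose Rayleigh quotients are controlled by a constant times $\mu_N$.

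The construction uses quadratic functions $f_{\sigma,\tau}=h(\sigma,\tau)$ for $\sigma,\tau\in W$, or equivalently the map $x\mapsto \sum_{i,j}h(\sigma_i,\sigma_j)(x)\,e_{ij}$ into symmetric matrices. Pointwise, the evaluation $W\to E_x$ has rank at most $r$. Consequently the orthogonal projection of $W$ onto the kernel of evaluation at $x$ has codimension at most $r$, and the associated Gram density $\rho_x=\sum_i|\sigma_i(x)|^2$ has pointwise ``rank'' at most $r$. The approach I intend is B\'erard's: consider the space of functions $\{h(\sigma,\sigma')\}$, or more efficiently $x\mapsto|\sigma(x)|^2$ restricted to subspaces. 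Alternatively, one can use a dimension-counting argument: if $\lambda_m$ with $m=\lfloor N/(r+1)\rfloor$ were larger than $8(r+1)^2\mu_N$, then every $\sigma$ in a suitable subspace would force $|\sigma|$ to be nearly $L^2$-orthogonal to the first $m$ eigenfunctions $\phi_0,\dots,\phi_{m-1}$ of $\Delta_g$. The constraints ``$\int|\sigma|^2\phi_j=0$'' are not linear, however, so I will linearize them by considering the sections $\phi_j\sigma_k$. These span a space of dimension comparable to $mN$, and the rank bound $r$ allows one to count the relations. Concretely, I will work with the operator $P=\sum_{i\leq N}\sigma_i\otimes\sigma_i$, the Bergman-type kernel, and estimate
\[
\int|\dd\rho|^2\leq 4\rho\sum|D\sigma_i|^2.
\]
Combining this with the pointwise bound $\rho\leq$ (something involving rank $r$) gives a test function $\rho$ with controlled quotient. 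Iterating on spectral windows $\mu\in[\mu_{j},\mu_{N}]$ produces $\lfloor N/(r+1)\rfloor+1$ test functions with disjoint ``spectral supports''.

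The main obstacle is obtaining linear independence, or almost orthogonality, of the functions so produced, while keeping the constant $8(r+1)^2$. Since this is precisely the content of Bordoni's Theorem 3.1(i) in \cite{Bordoni1998}, the realistic plan is to verify that his hypotheses (a metric connection on a real bundle of rank $r$ over a closed manifold) match ours, and to translate his indexing convention, in which his eigenvalue counting starts at $1$ while $\lambda_0=0$ here, into the $\lfloor N/(r+1)\rfloor$ of \eqref{eq:bordoni}. I would not re-derive his argument. The remaining risk is this index bookkeeping, which I would check on the trivial bundle with the flat connection: there $D^*D$ is $\Delta_g$ acting on $r$ copies of functions, so $\mu_N$ equals $\lambda_{\lceil N/r\rceil-1}$, and this is consistent with \eqref{eq:bordoni}.
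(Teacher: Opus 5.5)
Your eventual plan, invoking Theorem 3.1(i) of \cite{Bordoni1998}, is the same route the paper takes. The exploratory first half of your proposal (the functions $h(\sigma,\tau)$, the density $\rho=\sum_i|\sigma_i|^2$, the ``spectral windows'') can be discarded. Nothing there produces $\lfloor N/(r+1)\rfloor+1$ linearly independent test functions with Rayleigh quotient at most $8(r+1)^2\mu_N$, and that is exactly what the cited theorem supplies.

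The gap is that you never state or check the hypotheses of that theorem, and this check is the whole content of the paper's proof. Bordoni's result is an abstract Hilbertian comparison theorem, so its hypotheses are not ``a metric connection on a real bundle of rank $r$''. You must realize $D^*D$ and $\Delta_g$ as the self-adjoint operators of the closed forms $Q_E(s)=\int_X|Ds|^2\,dv_g$ on $H^1(X,E)$ and $Q_0(u)=\int_X|\dd u|_g^2\,dv_g$ on $H^1(X)$. These are Friedrichs extensions with compact resolvent, so the eigenvalue lists make sense. You must then exhibit the map $\varpi:L^2(X,E)\to L^2(X)$, $\varpi s=|s|_h$, and verify two properties:
\begin{itemize}
\item Fubini's property, $\|\varpi s\|_{L^2(X)}=\|s\|_{L^2(X,E)}$;
\item Kato's inequality in the form sense: $\varpi s\in H^1(X)$ and $Q_0(\varpi s)\le Q_E(s)$ for every $s\in H^1(X,E)$.
\end{itemize}

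Your pointwise inequality $|\dd|\sigma||\le|D\sigma|$, asserted only for smooth eigensections, does not give this. The function $|\sigma|$ is not differentiable on the zero set of $\sigma$, and the hypothesis is needed on the whole form domain. The paper handles both issues as follows.
\begin{itemize}
\item For smooth $s$, put $u_\varepsilon=\sqrt{|s|_h^2+\varepsilon^2}$. Metric compatibility gives $\dd u_\varepsilon(e_a)=h(D_{e_a}s,s)/u_\varepsilon$, and fiberwise Cauchy--Schwarz gives $|\dd u_\varepsilon|_g\le|Ds|$.
\item Since $0\le u_\varepsilon-|s|_h\le\varepsilon$, the family is bounded in $H^1(X)$ and converges to $|s|_h$ in $L^2$. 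Weak lower semicontinuity then yields $|s|_h\in H^1(X)$ with $Q_0(|s|_h)\le Q_E(s)$.
\item For general $s\in H^1(X,E)$, take smooth $s_j\to s$ in $H^1(X,E)$. Use $\bigl\| |s_j|_h-|s|_h \bigr\|_{L^2}\le\|s_j-s\|_{L^2}$ and the same lower semicontinuity.
\end{itemize}
Only then does Theorem 3.1(i), with the potential term $(1-C_r)\cdot 0$ vanishing, give $\mu_N\ge\frac{1}{8(r+1)^2}\lambda_{\lfloor N/(r+1)\rfloor}$.

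Your flat trivial-bundle check is correct: there $\mu_N=\lambda_{\lceil N/r\rceil-1}$, and $\lceil N/r\rceil-1\ge\lfloor N/(r+1)\rfloor$. However, a consistency check on one example cannot fix the index convention; that has to be read off Bordoni's statement.
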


\begin{proof}
We will show precisely how this result follows from Theorem 3.1(i) in \cite{Bordoni1998}.

Write $dv_g$ for the Riemannian volume measure. 
Consider two real Hilbert spaces $\mathcal H_E=L^2(X,E)$ and $\mathcal H_0=L^2(X)$,
with inner products $(s,t)_E=\int_X h(s,t)\,dv_g$ and $(u,v)_0=\int_X uv\,dv_g.$
The metrics $g$ and $h$ induce a metric on $T^*X\otimes E$.
For a local $g$-orthonormal tangent frame $e_1,\ldots,e_m$,
where $m=\dim_{\mathbb{R}} X$, the inner product is given by
$|Ds|^2=\sum_{a=1}^{m}h(D_{e_a}s,D_{e_a}s).$
The formal adjoint $D^*:C^\infty(X,T^*X\otimes E)\rightarrow C^\infty(X,E)$
is defined using these metrics and $dv_g$.
Let $H^1(X,E)$ be the completion of $C^\infty(X,E)$ under the norm $\|s\|_{H^1(X,E)}^2=\|s\|_E^2+\int_X|Ds|^2\,dv_g,$
and define $H^1(X)$ similarly, using $\mathrm{d}$ instead of $D$.
Consider the densely defined nonnegative quadratic forms
$Q_E(s)=\int_X|Ds|^2\,dv_g$ and $Q_0(u)=\int_X|\mathrm{d}u|_g^2\,dv_g,$
with $\operatorname{Dom}(Q_E)=H^1(X,E)$
and
$ \operatorname{Dom}(Q_0)=H^1(X).$
$Q_E$ and $Q_0$ are closed.
The nonnegative self-adjoint operators associated with \(Q_E\) and \(Q_0\) are the Friedrichs extensions of \(D^*D\) and \(\Delta_g\), respectively.
It follows from the standard elliptic theory that $D^*D$ and $\Delta_g$
 have compact resolvent. In particular, their eigenvalues
have finite multiplicities. 
Define 
$ \varpi :\mathcal H_E\rightarrow\mathcal H_0$ by 
$( \varpi s)(x)=|s(x)|_h =\sqrt{h_x(s(x),s(x))}.$
It is easy to verify that $\varpi $ satisfies
 the norm preservation condition, called Fubini's property
in \cite{Bordoni1998}. 

We now verify the Kato inequality.
First assume that $s \in C^\infty(X,E)$. For $\varepsilon>0$, let
$ u_\varepsilon =\sqrt{|s|_h^2+\varepsilon^2}$ be a smooth function.
It follows from metric compatibility that 
\[
\mathrm{d}u_\varepsilon(e_a)
=\frac{h(D_{e_a}s,s)}
       {\sqrt{|s|_h^2+\varepsilon^2}}.
\]
Consequently, by the fiberwise Cauchy--Schwarz inequality,
\begin{align*}
|\mathrm{d}u_\varepsilon|_g^2
&=
\sum_{a=1}^{m}
\frac{h(D_{e_a}s,s)^2}{|s|_h^2+\varepsilon^2} \leq
\frac{|s|_h^2}{|s|_h^2+\varepsilon^2}
\sum_{a=1}^{m}|D_{e_a}s|_h^2
\leq |Ds|^2.
\end{align*}
This implies that 
$u_\varepsilon$ is bounded in $H^1(X)$ for
$0<\varepsilon\leq1$.
Since $ 0\leq u_\varepsilon-|s|_h\leq\varepsilon$, 
$u_\varepsilon\to|s|_h=\varpi s$ uniformly on $X$ and in $L^2(X)$ as
$\varepsilon\to 0^+$. 
By the standard functional analysis argument,  $\varpi s \in H^1(X)$ and 
\[
Q_0(\varpi s)
\leq\liminf_{\varepsilon\to 0^+}Q_0(u_\varepsilon)
\leq Q_E(s),
\]
Now let $s\in H^1(X,E)$ and choose smooth sections
$s_j\to s$ in $H^1(X,E)$.
By the fiberwise triangle inequality 
$ \| \varpi s- \varpi t\|_0 \leq\|s-t\|_E,$ $\varpi  s_j\to \varpi s$ in $L^2(X)$. Since $s_j\to s$ in $H^1(X,E)$, the same argument implies that 
 $\varpi s_j$ is bounded
in $H^1(X)$. 
The standard functional analysis argument yields $\varpi s \in H^1(X)$ and 
$Q_0(\varpi s) \leq Q_E(s)$, which is the desired Kato's inequality.

Applying Theorem 3.1(i) in \cite{Bordoni1998} to $T'=D^*D, T=\Delta_g$,
 it follows that 
$$\mu_N \geq(1-C_r)\cdot0+C_r\lambda_k =\frac{1}{8(r+1)^2} \lambda_{\lfloor N/(r+1)\rfloor}.$$
\end{proof}

\begin{remark}\label{CR} 
Let $X$ be a closed K\"ahler manifold. 
Let \(L\to X\) be a Hermitian line bundle with metric \(h\) and compatible connection \(\nabla\). Equip the underlying real rank 2 vector bundle \(L_{\mathbb R}\) with
$h_{\mathbb R}(s,t):=\operatorname{Re}h(s,t)$,
and define the real connection by \(\nabla^{\mathbb R}_V s:=\nabla_Vs\) for every real vector field \(V\).
It follows from the compatibility of \(\nabla\) with \(h\) that
\[
V\bigl(h(s,t)\bigr)
=h(\nabla_Vs,t)+h(s,\nabla_Vt).
\]Taking real parts yields
\[
V\bigl(h_{\mathbb R}(s,t)\bigr)
=h_{\mathbb R}(\nabla^{\mathbb R}_Vs,t)
+h_{\mathbb R}(s,\nabla^{\mathbb R}_Vt),
\]which is precisely compatibility with the real metric.
To compare the adjoints, denote the complex and real \(L^2\) inner products by \(\langle\cdot,\cdot\rangle_{\mathbb C}\) and \((\cdot,\cdot)_{\mathbb R}\), respectively. Then $(u,v)_{\mathbb R}
=\operatorname{Re}\langle u,v\rangle_{\mathbb C}$ holds both for sections and for bundle-valued one-forms.
Thus the complex formal adjoint \(\nabla^*\) satisfies
\[
\langle\nabla s,\alpha\rangle_{\mathbb C}
=\langle s,\nabla^*\alpha\rangle_{\mathbb C}
\]for all smooth sections \(s\) and smooth \(L\)-valued one-forms \(\alpha\). Taking real parts yields
\[
(\nabla^{\mathbb R}s,\alpha)_{\mathbb R}
=(s,\nabla^*\alpha)_{\mathbb R}.
\]This is exactly the identity defining the real formal adjoint. The uniqueness therefore implies
\[
(\nabla^{\mathbb R})^*\alpha=\nabla^*\alpha,
\]and consequently
\[
(\nabla^{\mathbb R})^*\nabla^{\mathbb R}s
=\nabla^*\nabla s.
\]Thus the two connection Laplacians act identically on every smooth section.
 Since a complex basis
$s_1,\ldots,s_m$ becomes the real basis
$s_1,\sqrt{-1}s_1,\ldots,s_m,\sqrt{-1}s_m$, 
 realification leaves eigenvalues unchanged and doubles
their multiplicities.
By  \eqref{eq:bordoni},  if $\mu_j^{\mathbb R}$ denotes the eigenvalue list with real multiplicities, then
\[ \lambda_k(\Delta_g) \leq72\,\mu_{3k}^{\mathbb R}(\nabla^*\nabla) \]
for all $k\geq1$. 
\end{remark}

\begin{proof}[Proof of Theorem~\ref{thm:holomorphic-map}]

Fix $k\geq1$ and set
$q=\left\lceil\left(\frac{3d!k}{2}\right)^{\frac 1d}\right\rceil.$ 
For an arbitrary $\omega\in\K_\alpha$, it follows from Lemma~\ref{lem:trace}
that there exists 
 a smooth Hermitian metric $h$ on $L$ such that its local
squared norm $H$ satisfies
$\operatorname{tr}_{\omega}(-\sqrt{-1}\partial\bar\partial\log H)=c_\Phi$.
Equip $L^q$ with the Hermitian metric $h^q$ and denote the corresponding Chern connection by $\nabla_q$.
By the Bochner--Kodaira--Nakano
identity applied to \(L^q\)-valued \(0\)-forms (cf. page 330--332, Corollary 1.3 and (1.15) in Chapter VII
of \cite{DemaillyCADG}), 
every holomorphic
section $s \in H^0(M, L^q)$ satisfies
\[
\nabla_q^*\nabla_qs
=\operatorname{tr}_{\omega}\!\left(-\sqrt{-1}\partial\bar\partial\log H^q\right)s
=q\,\operatorname{tr}_{\omega}\!\left(-\sqrt{-1}\partial\bar\partial\log H\right)s
=q\,c_\Phi s.
\]
By Lemma~\ref{lem:sections}, these sections form a complex vector
space of dimension at least $\frac{q^d}{d!}$.

Regard $L^q$ as a real metric vector bundle of rank $2$, with metric
$\operatorname{Re} \left( h^q \right)$ and the same compatible connection.
By Remark \ref{CR}, the connection Laplacian $\nabla_q^*\nabla_q$ thus has at least $2\dim_{\mathbb C}H^0(M,L^q)\geq\frac{2q^d}{d!}\geq3k$
real linearly independent eigenvectors with eigenvalue $qc_\Phi$.
Counting eigenvalues with real multiplicities, 
we have 
\[
\mu^{\mathbb R}_{3k}(\nabla_q^*\nabla_q)\leq qc_\Phi.
\]
Proposition \ref{thm:bordoni}, applied with real rank $r=2$ and 
$N=3k$, now yields
\[
\lambda_k(M,\omega)
\leq72\mu^{\mathbb R}_{3k}(\nabla_q^*\nabla_q)
\leq72c_\Phi q.
\]
Since $k\geq1$,
\[
q\leq\left(\frac{3d!}{2}\right)^{\frac 1d}k^{\frac 1d}+1
\leq\left[\left(\frac{3d!}{2}\right)^{\frac 1d}+1\right]k^{\frac 1d}.
\]
The estimate is independent
of $\omega$, and thus the theorem is proved.
\end{proof}

\section*{Acknowledgments}
The author is partially supported by Zhejiang Provincial Natural Science Foundation of China (Grant No.~LQKWL26A0201).

\noindent Yuan Yuan, yuanyuan@westlake.edu.cn, Institute for Theoretical
Sciences, Westlake University, Hangzhou 310024, Zhejiang, China.


\begin{thebibliography}{99}



\bibitem{B94} M. Bordoni, {\em Spectral estimates for Schrödinger and Dirac-type operators on Riemannian manifolds}, Math. Ann. 298 (1994), no. 4, 693–718.

\bibitem{Bordoni1996}
M.~Bordoni,
\emph{Comparaison de spectres d'opérateurs de type Schrödinger et Dirac},
Séminaire de Théorie Spectrale et Géométrie, No.~14, Année 1995--1996,
69--81, Sémin. Théor. Spectr. Géom., 14, Univ. Grenoble I,
Saint-Martin-d'Hères, 1996.


\bibitem{Bordoni1998}
M.~Bordoni,
\emph{Spectral comparison between Dirac and Schrödinger operators},
Rend. Mat. Appl. (7) 18 (1998), no.~1, 181--196.

\bibitem{BLY1994}
J.-P.~Bourguignon, P.~Li, and S.-T.~Yau,
\emph{Upper bound for the first eigenvalue of algebraic submanifolds},
Comment. Math. Helv. \textbf{69} (1994), 199--207.


\bibitem{C17}
B. Colbois, {\em The spectrum of the Laplacian: a geometric approach}, Geometric and computational spectral theory, 1–40,
Contemp. Math., 700, Centre Rech. Math. Proc., Amer. Math. Soc., Providence, RI, 2017.

\bibitem{D95}
E. B. Davies, Spectral theory and differential operators, Cambridge Studies in Advanced Mathematics, 42. Cambridge University Press, Cambridge, 1995. x+182 pp. ISBN: 0-521-47250-4

\bibitem{DemaillyCADG}
J.-P. Demailly, Complex Analytic and Differential Geometry,
online book, 2012,
\url{https://www-fourier.univ-grenoble-alpes.fr/~demailly/manuscripts/agbook.pdf}.



\bibitem{GM88} S. Gallot and D. Meyer, {\em D'un résultat hilbertien à un principe de comparaison entre spectres. Applications}, Ann. Sci. École Norm. Sup. (4) 21 (1988), no. 4, 561–591.




\bibitem{GNY}
A.~Grigor'yan, Y.~Netrusov, and S.-T.~Yau,
\emph{Eigenvalues of elliptic operators and geometric applications},
Surveys in Differential Geometry 9 (2004), 147--217.



\bibitem{GY99}
A.~Grigor'yan and S.-T.~Yau,
\emph{Decomposition of a metric space by capacitors},
Differential equations: La Pietra 1996 (Florence), 39--75,
Proc. Sympos. Pure Math., 65, Amer. Math. Soc., Providence, RI, 1999.


\bibitem{Hersch1970}
J.~Hersch,
\emph{Quatre propri\'et\'es isop\'erim\'etriques de membranes
sph\'eriques homog\`enes},
C. R. Acad. Sci. Paris S\'er. A-B \textbf{270} (1970), A1645--A1648.




\bibitem{LY82}
P.~Li and S.-T.~Yau,
\emph{A new conformal invariant and its applications to the Willmore
conjecture and the first eigenvalue of compact surfaces},
Invent. Math. \textbf{69} (1982), no.~2, 269--291.

\bibitem{K20a}
G.~Kokarev,
\emph{Conformal volume and eigenvalue problems}, Indiana Univ. Math. J.
69 (2020), no.~6, 1975--2003.


\bibitem{K20b}
G.~Kokarev,
\emph{Bounds for Laplace eigenvalues of Kähler metrics}, Adv. Math. 365 (2020), 107061, 22 pp.

\bibitem{K93}
N.~Korevaar, \emph{Upper bounds for eigenvalues of conformal metrics},
J. Differ. Geom. 37 (1993), 73--93.








\bibitem{SX22} Y. Sire, H. Xu, 
{\em On a new functional for extremal metrics of the conformal Laplacian in high dimensions}, 
Commun. Contemp. Math. 24 (2022), no. 9, Paper No. 2150096, 16 pp.


\bibitem{YangYau1980}
P.~C.~Yang and S.-T.~Yau,
\emph{Eigenvalues of the Laplacian of compact Riemann surfaces and
minimal submanifolds},
Ann. Scuola Norm. Sup. Pisa Cl. Sci. (4) \textbf{7} (1980),
no.~1, 55--63.

\bibitem{Y}
S.-T.~Yau,
\emph{An application of eigenvalue estimate to algebraic curves defined by congruence subgroups},
Math. Res. Lett. 3 (1996), 167--172.




\end{thebibliography}
\end{document}